\title{The Harmonic Descent Chain}
\author{%
  David J. Aldous\footnote{Department of Statistics,
 367 Evans Hall \#\  3860,
 U.C. Berkeley CA 94720;
    aldousdj@berkeley.edu;  www.stat.berkeley.edu/users/aldous.}
    \and Svante Janson\footnote{Department of Mathematics, Uppsala University, P.O.Box 480, SE-751 06 Uppsala Sweden; 
   svante.janson@math.uu.se} 
  \and 
   Xiaodan Li\footnote{Department of Mathematics,
Shanghai University of Finance and Economics,
China; lixiaodan@mail.shufe.edu.cn.}}
\begin{document}
     
\maketitle

Key words: Markov chain; occupation measure; coupling.


\begin{abstract}
The decreasing Markov chain on \{1,2,3, \ldots\} with transition probabilities $p(j,j-i) \propto 1/i$ 
arises as a key component of the analysis of the beta-splitting random tree model.
We give a direct and almost self-contained ``probability" treatment of its occupation probabilities,
as a counterpart to a more sophisticated but perhaps opaque derivation using a limit continuum tree structure and Mellin transforms.
\end{abstract}

\newcommand{\ABS}[1]{\left(#1\right)} 
\newcommand{\veps}{\varepsilon} 

\newcommand{\Ex}{\mathbb{E}}
\renewcommand{\Pr}{\mathbb{P}}
\newcommand\qw{^{-1}}
\newcommand\qww{^{-2}}
\newcommand\qq{^{1/2}}
\newcommand\qqw{^{-1/2}}
\newcommand\gl{\lambda}
\newcommand\set[1]{\ensuremath{\{#1\}}}
\newcommand\floor[1]{\lfloor#1\rfloor}
\newcommand\asto{\overset{\mathrm{a.s.}}{\longrightarrow}}
\newcommand\pto{\overset{\mathrm{p}}{\longrightarrow}}

\newcommand\marginal[1]{\marginpar[\raggedleft\tiny #1]{\raggedright\tiny#1}}
\newcommand\SJ{\marginal{SJ} }
\newcommand\SJm[1]{\marginal{SJ: #1} }

\newtheorem{theorem}{Theorem}[section]	
\newtheorem{proposition}[theorem]{Proposition}
\newtheorem{lemma}[theorem]{Lemma}
\newtheorem{corollary}[theorem]{Corollary}
\newtheorem{claim}[theorem]{Claim}
\newtheorem{conjecture}[theorem]{Conjecture}
\newtheorem{example}[theorem]{Example}
\newtheorem{require}[theorem]{Requirement}
\newtheorem{fact}[theorem]{Fact}
\newtheorem{definition}[theorem]{Definition}

\numberwithin{equation}{section}

\section{Introduction}
Write    $h_n := \sum_{i=1}^n \frac1i $ for the harmonic series.
We will study the discrete-time Markov chain 
$(X_t, t = 0,1,2,\ldots)$ on states $\{1,2,3 \ldots\}$ with transition probabilities 
 \begin{equation}
  p(j,i) = \frac{1}{(j-i) h_{j-1}},  \ 1 \le i < j, \ j \ge 2 
  \label{P:def}
  \end{equation}
 and $p(1,1) = 1$. 
 So sample paths are strictly decreasing until absorption in state $1$. 
 The simple form (\ref{P:def}, \ref{L:def}) of the transitions suggests that this chain might have been arisen previously in some different context, but we have not found any reference.
  Let us call this the {\em harmonic descent} (HD) chain.

As discussed at length elsewhere \cite{beta2},  
the HD chain arises in a certain model of random $n$-leaf trees: the chain describes the number of descendant leaves of a vertex, as one moves 
along the path from the root to a uniform random leaf.
 In this article we study the ``occupation probability", that is
 \begin{equation}
 \mbox{
 $a(n,i) := $ probability that the chain started at state $n$ is ever in state $i$.
 }
 \label{def:ani}
 \end{equation}
 So $a(n,n) = a(n,1) = 1$.
 The motivation for studying $a(n,i)$ is that
 the mean number of $i$-leaf subtrees of a random $n$-leaf tree equals $n a(n,i)/i$.
 There is a general notion of the {\em fringe distribution} \cite{me-fringe,fringe} of a tree as viewed from a  random leaf. 
 Knowing the explicit value of the limits  $\lim_{n \to \infty} a(n,i)$
 enables one in \cite{beta3} to describe explicitly the fringe distribution in our tree model. 
 
 We should also mention that $\sum_{i=2}^n a(n,i)$ is just the mean time $\Ex_n T_1$ for the chain started at $n$
 to be absorbed in state $1$.
 A detailed analysis of the distribution of $T_1$ and related quantities, by very different methods, is 
 given in \cite{beta1}.
 
It seems very intuitive (but not obvious at a rigorous level) that the limits $\lim_{n \to \infty} a(n,i)$ exist, 
however there seems no intuitive reason to think there should be some simple formula for the limits.  
But the purpose of this paper is to give an almost\footnote{We quote one sharp estimate from \cite{beta1} as our Theorem \ref{T:beta1}.}  self-contained proof of
 \begin{theorem}
  \label{T:alimit}
 For each $i = 2,3,\ldots$,
 \begin{eqnarray}
\lim_{n \to \infty} a(n,i) &=& \frac{6 h_{i-1}}{\pi^2 (i-1)} .\label{talimit}
 \end{eqnarray}
 \end{theorem}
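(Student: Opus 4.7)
My plan is to identify the limits via a formal recursion, verify the displayed closed form, and finally prove that the limits actually exist; the last step is the principal obstacle.

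\textbf{Step 1 (limit recursion).} Conditioning on the state $k$ from which the chain makes its last jump into $\{1,\ldots,j\}$ gives, for every $n>j$,
\[
  a(n,j) \;=\; \sum_{k=j+1}^n a(n,k)\,p(k,j) \;=\; \sum_{k=j+1}^n\frac{a(n,k)}{(k-j)\,h_{k-1}}.
\]
Specializing to $j=1$ yields the exact identity $\sum_{k=2}^n a(n,k)/((k-1)h_{k-1}) = 1$, valid for every $n$. Granting existence of $c_i := \lim_n a(n,i)$, this normalization provides the tightness needed to pass to the $n\to\infty$ limit, yielding
\[
  c_j \;=\; \sum_{k>j}\frac{c_k}{(k-j)\,h_{k-1}}, \qquad c_1 = 1.
\]

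\textbf{Step 2 (solving the recursion).} Try the ansatz $c_k = \alpha\,h_{k-1}/(k-1)$ for $k\ge 2$. The partial-fraction identity $\frac{1}{(k-1)(k-j)} = \frac{1}{j-1}\bigl(\frac{1}{k-j}-\frac{1}{k-1}\bigr)$ together with telescoping gives $\sum_{k>j}\frac{1}{(k-1)(k-j)} = \frac{h_{j-1}}{j-1}$ for every $j\ge 2$, so the ansatz satisfies the recursion at each $j\ge 2$. The $j=1$ constraint becomes $\alpha\sum_{k\ge 2}1/(k-1)^2=\alpha\zeta(2)=1$, forcing $\alpha=6/\pi^2$ and recovering the displayed formula.

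\textbf{Step 3 (existence).} This is where the real work lies. The first-step recursion $h_{n-1}a(n,i)=\sum_{k=i}^{n-1}a(k,i)/(n-k)$ rearranges to
\[
  a(n{+}1,i)-a(n,i) \;=\; \frac{1}{h_n}\Bigl[\frac{n-1}{n}\,a(n,i)\;-\;\sum_{k=i}^{n-1}\frac{a(k,i)}{(n-k)(n+1-k)}\Bigr],
\]
and a formal computation replacing every $a(k,i)$ by a common constant would make the bracket $O(1/n^2)$ and hence $|a(n{+}1,i)-a(n,i)|=O(1/(n^2\log n))$, summable. To avoid circular reasoning I would run a bootstrap starting from the crude bound $|a(n{+}1,i)-a(n,i)|\le 2/h_n$ and iteratively improve the rate of convergence, or alternatively extract a subsequential limit by compactness and use Steps 1--2 to identify it as the explicit formula, so that every subsequential limit agrees and the full limit exists. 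The sharp estimate quoted as Theorem~\ref{T:beta1} presumably supplies the decisive quantitative input.
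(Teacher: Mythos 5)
Your Step 2 is correct and is exactly the paper's Lemma \ref{L:guess}. The gaps are in Steps 1 and 3, and they are the two points where the paper has to do real work. First, passing to the limit in the recursion $a(n,j)=\sum_{k>j}a(n,k)p(k,j)$ requires a tail bound uniform in $n$, namely \eqref{Lan}: $\lim_{L\to\infty}\liminf_n\sum_{k>L}a(n,k)p(k,j)=0$. You claim this ``tightness'' follows from the $j=1$ normalization $\sum_{k=2}^n a(n,k)p(k,1)=1$, but that is circular: the measures $\mu_n(k):=a(n,k)p(k,1)$ are probability measures for each $n$, and pointwise convergence $a(n,k)\to c_k$ plus Fatou only gives $\sum_k c_kp(k,1)\le 1$; ruling out escape of mass to infinity is exactly the $j=1$ instance of the limit identity you are trying to prove. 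The quantity $\sum_{k>L}\mu_n(k)$ is the probability that the chain started at $n$ jumps over $L$ directly into state $1$, and bounding it requires an actual overshoot estimate. The paper supplies this as Lemma \ref{L:over} (the log-overshoot has uniformly bounded mean), which via Markov's inequality gives $\sum_{k>L}a(n,k)p(k,j)\le K/(\log L-\log j)$.

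Second, neither of your two routes to existence is complete. The bootstrap on $|a(n+1,i)-a(n,i)|$ is only a sketch; the ``formal computation replacing every $a(k,i)$ by a constant'' assumes the conclusion, and you give no mechanism by which the crude bound $2/h_n$ (which is not summable) improves itself. Your fallback --- extract subsequential limits and identify them via Steps 1--2 --- silently assumes that the infinite system \eqref{aisum}--\eqref{aisum2} has a \emph{unique} solution. The paper explicitly warns that this is not known a priori, and its Proposition \ref{P:ablimit} is devoted precisely to proving $a(i)=b(i)$ by a separate argument: iterating \eqref{aisum} to obtain the decomposition \eqref{bdec}, showing the induced overshoot distributions $\hat b_k$ are genuine probability distributions (Lemma \ref{L:hat}, which is where Euler's formula actually enters the uniqueness step), and using that these distributions escape to infinity together with the already-established convergence $a(j,i)\to a(i)$. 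Moreover the paper does not prove existence by any recursion at all, but by a shift-coupling (Section \ref{sec:proofalim}) whose quantitative engine is Theorem \ref{T:beta1} plus the same overshoot lemma. So the decisive missing ingredients in your plan are (i) the overshoot bound and (ii) an argument replacing uniqueness of solutions to the infinite linear system; without them both Step 1 and Step 3 are unsupported.
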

 As the reader might guess, we will encounter {\em Euler's formula}  $\sum_{i \ge 1} 1/i^2 = \pi^2/6$.
 
We prove Theorem \ref{T:alimit}  in two stages.
 In section \ref{sec:proofalim}  we will prove by coupling that the limits $a(i)$ exist. 
 This is straightforward in outline, though somewhat tedious in detail.
 More interesting, and therefore presented first in section \ref{SSpf1}, is the explicit formula for the limits $a(i)$.
  The limits satisfy an infinite set of equations \eqref{aisum}, and a solution was found by inspired guesswork.
  Then we need only to check that the solution is unique.
  
 Could one prove Theorem \ref{T:alimit} without guesswork? 
 We do not know how to do so within the Markov chain setting.
 In the random tree model, there is a limit continuum tree structure within which there is a continuous analog 
 of the ``hypothetical" chain described below, and by analysis of that process and the exchangeability properties of the
 continuum tree, forthcoming work \cite{beta3} relates the 
 $(a(i))$ to the $x \downarrow 0$ behavior of a certain function $f(x)$ determined by its Mellin transform:
 by technically intricate analysis one can re-prove Theorem \ref{T:alimit} via a ``proof by calculation".

 \section{The explicit limit}\label{SSpf1}
 In section \ref{sec:proofalim}  we will prove by coupling
\begin{proposition}
\label{P:alimit}
 For each $i = 1,2,3,\ldots$,
 \begin{eqnarray}\label{apr3}
\lim_{n \to \infty} a(n,i) &:=& a(i) \mbox{ exists}  \label{ailim} \\
a(i)  &=&  \sum_{ j >i} a(j) p(j,i)  \label{aisum} 
 \end{eqnarray}
 and
 \begin{equation}
 a(1) = 1 .  
 \label{aisum2} 
 \end{equation}
\end{proposition}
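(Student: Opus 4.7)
The plan splits cleanly into three stages, in increasing order of difficulty.

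First, \eqref{aisum2} is immediate: since $p(1,1)=1$, state $1$ is absorbing, so $a(n,1)=1$ for every $n\ge1$ and hence $a(1)=1$.

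Second, I would record the following finite-$n$ analogue of \eqref{aisum}. Because the chain is strictly decreasing until absorption, it visits each state at most once, so the event that the chain started at $n$ ever visits $i$ is the disjoint union, over $i<j\le n$, of ``the chain visits $j$ and the jump from $j$ lands at $i$''. The strong Markov property at the unique visit time to $j$ then yields the last-jump identity
\begin{equation*}
a(n,i) \;=\; \sum_{j=i+1}^{n} a(n,j)\,p(j,i), \qquad 1\le i<n.
\end{equation*}

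The substantive step is \eqref{ailim}. Here I would set up a grand coupling: let $(B_j)_{j\ge 2}$ be independent with $\Pr(B_j=k)=1/(k h_{j-1})$ for $1\le k<j$, and for every starting state $n$ define $X^n_0:=n$ and $X^n_{t+1}:=X^n_t-B_{X^n_t}$. Under this coupling, whenever $X^n$ and $X^m$ share a common state $j$ at any time, they execute identical subsequent jumps, so their visited sets below $j$ coincide. It follows that for $n<m$,
\begin{equation*}
|a(n,i)-a(m,i)| \;\le\; \Pr\bigl(X^n \text{ and }X^m\text{ share no state in }\{i,i+1,\ldots\}\bigr).
\end{equation*}
To bound the right-hand side I would fix a threshold $n_0>i$, let $K$ be the first state in $\{1,\dots,n_0\}$ entered by the larger chain, and use the decomposition $a(m,i)=\sum_{k=i}^{n_0}\Pr_m(K=k)\,a(k,i)$, showing that $\Pr_m(K=\cdot)$ stabilizes as $m\to\infty$ and that the contribution of $\{K<i\}$ is negligible. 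This is where the authors' qualifier ``tedious in detail'' bites: because the chain's jumps are only logarithmically small compared to the current state, bounding the probability that the chain overshoots a given window requires careful (likely inductive) estimates on several nested scales.

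Finally, given existence of the limits, I would let $n\to\infty$ in the finite-$n$ identity to obtain \eqref{aisum}. Term-by-term convergence plus Fatou immediately gives $\sum_{j>i} a(j)p(j,i)\le a(i)$. For the reverse inequality, because $\sum_{j>i} p(j,i)=\infty$ no crude dominated-convergence bound applies, so I would truncate at a large $N$ and establish that $\sum_{N<j\le n} a(n,j)\,p(j,i)\to 0$ as $N\to\infty$ uniformly in $n$. A promising route is to observe that $a(n,j)$ is small when $j$ is close to $n$ (for instance $a(n,n-1)=1/h_{n-1}$) and, more generally, to bootstrap from the coupling a uniform bound of roughly the form $a(n,j)\le C/h_j$, which would make the tail $\sum_{j>N}a(n,j)\,p(j,i)$ controlled at rate $1/(j\log^2 j)$ and hence vanishing as $N\to\infty$ uniformly in $n$.
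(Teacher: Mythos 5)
The central step of your proposal --- the ``grand coupling'' in which every chain currently at state $j$ uses the same jump variable $B_j$ --- does not work, and this is a fatal gap rather than a tedious detail. Under that coupling two chains coincide, as \emph{sets} of visited states, from the first state common to their two orbits onward; but until that first common state every $B_j$ consumed is indexed by a distinct $j$, so up to coalescence the two trajectories are distributed exactly as two \emph{independent} copies of the chain. The visited set of a chain contains $j$ with probability $a(\cdot,j)\asymp (\log j)/j$, so a first-moment bound gives $\Pr(\mbox{the two orbits meet in } [i,\infty))\le\sum_{j\ge i}a(n,j)\,a(m,j)=O\bigl((\log i)^2/i\bigr)$: the coalescence state is tight, not tending to infinity. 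Hence for fixed large $i$ your inequality $|a(n,i)-a(m,i)|\le\Pr(\mbox{no common state in }[i,\infty))$ has right-hand side close to $1$, not to $0$. This is exactly why the paper does \emph{not} use independent (or site-indexed) randomness but the \emph{maximal} coupling of section \ref{SSmaximal}, where the two jumps are maximally positively correlated at each step: from comparable states $x\le y$ the per-step coalescence probability is of order $1/\log y$, and over the order $(\log y)^2$ steps spent at comparable heights coalescence occurs with probability tending to $1$. Your fallback --- decomposing over the first entrance state $K$ into $\{1,\dots,n_0\}$ and ``showing that $\Pr_m(K=\cdot)$ stabilizes'' --- is circular, since $\Pr_m(K=k)=\sum_{j>n_0}a(m,j)p(j,k)$ is itself a limit of exactly the type you are trying to establish.

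The peripheral pieces are correct but lighter than the real proof. The observations $a(1)=1$, the last-jump identity, and the Fatou half of \eqref{aisum} match the paper. For the reverse inequality your proposed uniform bound $a(n,j)\le C/h_j$ is plausible but not free (the crude estimate $a(n,j)\le\sum_{k>j}p(k,j)\le h_{n-j}/h_j$ is not uniform in $n$); the paper instead notes that $\sum_{j>L}a(n,j)p(j,i)$ is the probability that the first entrance into $[1,L]$ lands at $i$, hence is at most $\Pr_n(V_L\ge\log L-\log i)\le K/(\log L-\log i)$ by the logarithmic overshoot bound of Lemma \ref{L:over} and Markov's inequality. That overshoot lemma, together with the estimate $\Ex_x[T_1]=\frac{6}{\pi^2}\log x+O(1)$ quoted from \cite{beta1}, is also what the paper uses to reduce the case $x_0/y_0\to0$ to the comparable-size case; your proposal contains no substitute for either ingredient.
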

 Such a coupling proof does not give any useful quantitative information about the limits $a(i)$. 
 In this section we show how to derive the value of the limits, granted Proposition \ref{P:alimit}.
 
For fixed $n\ge 2$ it is clear that the values of 
$(a(n,i), 1\le i \le n)$ are determined by the natural system of equations
\begin{eqnarray}
 a(n,n) &=&  1, \label{ann}\\
a(n,i)  &=&  \sum_{n \ge  j >i} a(n,j) p(j,i), \quad n - 1 \ge i \ge 1, \label{ansum} \\
a(n,1) &=& 1 . \label{an1}
\end{eqnarray}
So the equations 
\eqref{aisum}--\eqref{aisum2}
are what one expects as the $n \to \infty$ analog of  \eqref{ansum}--\eqref{an1}.
It is not obvious that these equations have a unique solution, but the following solution was
 found by  inspired guesswork, rather than calculation.
 Define
 \begin{align}\label{bi}
 b(1) = 1; \quad b(i) =  \frac{6 h_{i-1}}{\pi^2 (i-1)} , \quad i \ge 2.
\end{align} 
 
  \begin{lemma}
 \label{L:guess}
 Equations \eqref{aisum}--\eqref{aisum2} are satisfied by $b(i)$ in \eqref{bi}.
 \end{lemma}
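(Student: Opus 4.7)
The plan is to substitute the proposed solution $b(i)$ directly into the equation \eqref{aisum} and verify it splits into two cases: $i \ge 2$, which reduces to a telescoping identity involving partial fractions, and $i = 1$, which reduces to Euler's formula $\sum_{k \ge 1} k^{-2} = \pi^2/6$.

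First I would handle the case $i \ge 2$. Plugging $b(j) = \frac{6 h_{j-1}}{\pi^2(j-1)}$ and $p(j,i) = \frac{1}{(j-i) h_{j-1}}$ into the right-hand side of \eqref{aisum}, the factor $h_{j-1}$ cancels, leaving
\begin{equation*}
\sum_{j > i} b(j) p(j,i) \;=\; \frac{6}{\pi^2}\sum_{j > i} \frac{1}{(j-1)(j-i)}.
\end{equation*}
Reindexing by $k = j - i \ge 1$ and applying the partial fraction decomposition
\begin{equation*}
\frac{1}{k(k+i-1)} \;=\; \frac{1}{i-1}\left(\frac{1}{k} - \frac{1}{k+i-1}\right),
\end{equation*}
the series telescopes to $h_{i-1}/(i-1)$. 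Multiplying by $6/\pi^2$ recovers $b(i)$, as required.

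For the case $i = 1$, I would observe that \eqref{aisum} with $i=1$ becomes
\begin{equation*}
b(1) \;=\; \sum_{j \ge 2} \frac{6 h_{j-1}}{\pi^2(j-1)} \cdot \frac{1}{(j-1) h_{j-1}} \;=\; \frac{6}{\pi^2} \sum_{j \ge 2} \frac{1}{(j-1)^2},
\end{equation*}
and the inner sum equals $\pi^2/6$ by Euler's formula, so the right-hand side equals $1 = b(1)$, confirming both \eqref{aisum} at $i=1$ and consistency with \eqref{aisum2}.

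There is no real obstacle here: the lemma is a verification, and both Fubini-type interchanges and the convergence of the series are trivial since all terms are positive and the partial-fraction sum is absolutely convergent. The only mild point worth flagging is that \eqref{aisum} at $i=1$ and \eqref{aisum2} are a priori independent constraints on $b(1)$, but the computation shows they are compatible — the value $b(1)=1$ is consistent with the value forced by \eqref{aisum}. Uniqueness of the solution to \eqref{aisum}--\eqref{aisum2} is a separate matter, not part of this lemma.
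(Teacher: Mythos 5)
Your proof is correct and follows essentially the same route as the paper: cancel the $h_{j-1}$ factors, apply the partial fraction decomposition $\frac{1}{(j-i)(j-1)}=\frac{1}{i-1}\bigl(\frac{1}{j-i}-\frac{1}{j-1}\bigr)$ and let the sum telescope to $h_{i-1}$ for $i\ge2$, and invoke Euler's formula for $i=1$. No issues.
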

  \begin{proof}
 Equation \eqref{aisum} for $b(i)$ is explicitly
 \begin{equation}
 b(i) = \sum_{j>i} \frac{b(j)}{h_{j-1} (j-i)} , \quad  i \ge 1 .
 \label{bj}
 \end{equation}
 To verify \eqref{bj}, consider $i >1$. We have
 \begin{align}\label{apr1}
 \sum_{j>i} \frac{b(j)}{(j-i)h_{j-1}} 
 &=
 \frac{6}{\pi^2}  \sum_{j>i} \frac{1}{(j-i)(j-1)}
\notag\\
  &=
 \frac{6}{\pi^2(i-1)}  \sum_{j>i} \left( \frac{1}{j-i} - \frac{1}{j-1} \right)
\notag\\
   &=
 \frac{6}{\pi^2(i-1)} \lim_{k \to \infty} \left( \sum_{j=1}^{k-i}
     \frac{1}{j} -  \sum_{j=i}^{k-1} \frac{1}{j} \right)
\notag\\
    &=
 \frac{6}{\pi^2(i-1)} \ h_{i-1} = b(i) .
 \end{align}
 For $i = 1$:
 \[ 
  \sum_{j>1} \frac{b(j)}{h_{j-1} (j-1)} = \frac{6}{\pi^2} \sum_{j>1} \frac{1}{(j-1)^2} = 1 = b(1) .
  \]
 \end{proof}
 As noted before, at first sight we do not know that these  equations \eqref{aisum}--\eqref{aisum2} have a 
{\em unique}  solution.
We need a further careful argument to prove that $a(i)\equiv b(i)$,which then (granted Proposition \ref{P:alimit}) 
completes a proof of Theorem \ref{T:alimit}.

\begin{proposition}
\label{P:ablimit}
For $a(i)$ defined by the limit \eqref{apr3} and $b(i)$ defined by \eqref{bi}, we have
$a(i) = b(i), i \ge 1$.
\end{proposition}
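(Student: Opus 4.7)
Set $d(i) := a(i) - b(i)$ for $i \geq 1$. From the equations \eqref{aisum}--\eqref{aisum2} satisfied by both $a$ and $b$, we have $d(1) = 0$, $|d(i)| \leq a(i) + b(i) \leq 2$, and
\[ d(i) = \sum_{j > i} d(j) \, p(j,i), \qquad i \geq 1. \]
The goal is to show $d \equiv 0$.

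I would apply a Doob-style $h$-transform using the known solution $b$. For $i \geq 2$ define $\hat p(i, j) := p(j,i) \, b(j)/b(i)$ for $j > i$. Stochasticity $\sum_{j>i} \hat p(i,j) = 1$ is exactly Lemma \ref{L:guess} at index $i \geq 2$. Thus $\hat p$ defines an increasing Markov chain $\hat X$ on $\{2, 3, \ldots\}$, and since $\hat X_t$ is strictly increasing, $\hat X_t \to \infty$ a.s. Setting $g(i) := d(i)/b(i)$, the equation for $d$ rewrites as
\[ g(i) = \sum_{j > i} g(j) \, \hat p(i, j) = \Ex_i\bigl[g(\hat X_1)\bigr], \]
so $g$ is harmonic for $\hat X$ and $g(\hat X_t)$ is a martingale. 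The remaining constraint $d(1) = 0$ reads $\sum_{j \geq 2} d(j) p(j,1) = 0$; using the explicit identity $b(j) p(j,1) = 6/(\pi^2 (j-1)^2)$ this translates to the orthogonality relation
\[ \sum_{j \geq 2} \frac{g(j)}{(j-1)^2} = 0. \]
Hence, if $g$ is shown to equal a constant $L$, then $L \cdot \pi^2/6 = 0$, forcing $L = 0$, whence $d \equiv 0$ and $a \equiv b$.

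The crux is then to show that bounded harmonic functions for $\hat X$ are constant. The natural route is martingale convergence: $g(\hat X_t)$, being a bounded martingale (once one establishes that $g$ itself is bounded), converges a.s.\ and in $L^1$ to some tail-measurable $g_\infty$ with $g(i) = \Ex_i g_\infty$; triviality of the tail $\sigma$-algebra of the increasing chain $\hat X$ then makes $g_\infty$ deterministic.

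I expect two technical obstacles. First, because $b(i) \to 0$, the bound $|d(i)| \leq 2$ does not immediately give boundedness of $g = d/b$; one needs to sharpen the control on $|a(i) - b(i)|$, presumably by revisiting the coupling behind Proposition \ref{P:alimit} to obtain an estimate of the shape $|a(i) - b(i)| = O(b(i))$. Second, tail triviality for $\hat X$ must be justified: heuristically this is plausible because $\hat X$ has heavy-tailed jumps (indeed $\Ex_i[\hat X_1 - i] = \infty$), so two copies started at different states should quickly reach arbitrarily large common levels and admit a merging coupling, but a rigorous implementation will require careful probabilistic bookkeeping.
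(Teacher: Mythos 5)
Your $h$-transform framework is internally consistent as far as it goes (the stochasticity of $\hat p$ is indeed Lemma \ref{L:guess}, and the $i=1$ constraint does force the constant to vanish), but the two ``technical obstacles'' you flag at the end are not technicalities: each is a genuine gap, and the first one is close to circular. To run the bounded-martingale argument you need $g=(a-b)/b$ bounded, i.e.\ $a(i)=O(b(i))=O(\log i/i)$. Nothing proved up to this point gives that: Proposition \ref{P:alimit} is established by a shift-coupling that, as the paper stresses, yields no quantitative information about the $a(i)$, and the crude a priori bounds go the wrong way (e.g.\ $a(i)\le\sum_{j>i}p(j,i)$ is a divergent series, and the conditional probability of landing exactly on $i$ from $i+1$ is of order $1/\log i\gg \log i/i$). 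Proving $a(i)\asymp b(i)$ requires controlling the pre-jump (entrance) distribution into $[1,i]$, which is essentially the content of the theorem you are trying to prove. The second gap is also substantive: tail triviality of the increasing chain $\hat X$, uniformly enough over starting states that the a.s.\ limit $g_\infty$ is the \emph{same} constant for all $i$, amounts to constructing a successful coupling for the reversed chain --- comparable in difficulty to all of Section \ref{sec:proofalim}, and not something you can wave through with ``careful probabilistic bookkeeping.''

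It is worth seeing how the paper's proof uses the same two ingredients (harmonicity of $b$, plus the already-proved convergence $a(j,i)\to a(i)$) while avoiding both obstacles. Instead of transforming the kernel, one iterates the relation $b(i)=\sum_{j>i}b(j)p(j,i)$ until the leading index exceeds a level $k$, obtaining $b(i)=\sum_{k\ge j\ge i}\hat b_k(j)\,a(j,i)$ where $\hat b_k(j):=\sum_{m>k}b(m)p(m,j)$ is the ``entrance distribution'' over level $k$. The explicit computation of Lemma \ref{L:hat} (an Abel-summation identity reducing to Euler's formula) shows $\hat b_k$ is an honest probability distribution, and since $\hat b_k$ escapes to infinity, $b(i)-a(i)=\sum_j\hat b_k(j)(a(j,i)-a(i))\to0$. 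No bound on $a(i)/b(i)$ and no Liouville property are needed --- only the pointwise convergence $a(j,i)\to a(i)$, which is exactly what Proposition \ref{P:alimit} supplies. Your $\hat X$ is the time-reversal of this picture, and Lemma \ref{L:hat} is the mass-conservation statement you would need anyway; I recommend recasting your argument in that entrance-distribution form, where the two gaps disappear.
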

\begin{proof}
Fix large $k$.
By considering the chain started at $n$ and decomposing at the jump over $k$:
\[
a(n,i) = \sum_{n \ge m > k} \sum_{k \ge j \ge i} a(n,m) p(m,j)a(j,i), \ i \le k < n .
\]
Letting $n \to \infty$ suggests
\begin{equation}
a(i) = \sum_{m > k} \sum_{k \ge j \ge i} a(m) p(m,j)a(j,i), \ i \le k .
\label{aii}
\end{equation}
This does not have a direct Markov chain interpretation, because one cannot
formalize the idea of starting a hypothetical version of the chain from
$+\infty$ at time  
$- \infty$  and making its occupation probability be $(a(i))$.
Nevertheless \eqref{aii} is correct, and can be derived from the relation
\eqref{aisum}.
Fix $i$ and $k$ with $1\le i\le k$ 
and in \eqref{aisum} recursively substitute $a(j)$ by the same equation for
every $j\le k$.
This yields, with summation over all paths of the form
$\eta = (i_1 = m > i_2 = j> i_3 > \ldots > i_q > i)$ with $i_1 > k \ge i_2$:
\[ a(i) = \sum_\eta a(m) p(m,j)p(j,i_3)p(i_3,i_4) \dotsm p(i_q,i) \]
which collapses to \eqref{aii}.
 The key part of our proof is that the argument above for \eqref{aii} depends only on $(a(i))$ satisfying  \eqref{aisum}, 
 which by Lemma \ref{L:guess} also holds for $(b(i))$, and so the conclusion holds also for $(b(i))$:
\begin{equation}
b(i) = \sum_{m > k} \sum_{k \ge j \ge i} b(m) p(m,j)a(j,i), \ i \le k .
\label{bdec}
\end{equation}
Now consider
\begin{align}\label{apr2}
 \hat{b}_k(j) := \sum_{m>k} b(m)p(m,j), \ 1 \le j \le k,   
\end{align}
motivated as the overshoot (over $k$) distribution
associated with our hypothetical version of a chain run from time $-\infty$. 
We will verify later
 \begin{lemma} 
 \label{L:hat}
$ (\hat{b}_k(j) , 1 \le j \le k)$ is indeed a probability distribution, for each $k \ge 1$.
 \end{lemma}
 Now let us re-write \eqref{bdec} as
\begin{equation}
b(i) =  \sum_{k \ge j \ge i}  \hat{b}_k(j)  a(j,i)
=  \sum_{k \ge j \ge 1}  \hat{b}_k(j)  a(j,i)
, \quad i \le k 
\label{bdec2}
\end{equation}
and thus
\begin{equation}
b(i)-a(i) =  \sum_{k \ge j \ge 1}  \hat{b}_k(j)  (a(j,i)-a(i)), 
\quad i \le k 
\label{bdec2'}
.\end{equation}
It follows from \eqref{apr2} that $\hat b_k(j)\to0$ as $k \to \infty$ for every
fixed $j$,
and thus the distributions $\hat{b}_k$ go off to infinity, that is 
\[ \lim_{k \to \infty} \hat{b}_k[1,L] = 0 \mbox{ for each } L < \infty . \]
So letting $k \to \infty$ for fixed $i$ we see that \eqref{bdec2'} and
Proposition \ref{P:alimit} imply that $b(i) = a(i)$, 
establishing Proposition \ref{P:ablimit}.
  \end{proof}
  
  \begin{proof}[Proof of Lemma \ref{L:hat}]
 
  From the definitions of $b(m)$ and $p(m,j)$ this reduces to proving that for each $k \ge 2$
\begin{equation}
\sum_{j=1}^k \sum_{m=k+1}^\infty \frac{1}{(m-j)(m-1)} = \frac{\pi^2}{6} .
\label{reduced}
\end{equation}
For $k = 1$ this is Euler's formula.
Furthermore, for $k \ge 1$, we have,
writing $s_k$ for the left side of \eqref{reduced} and $\phi(m,j) =
\frac{1}{(m-j)(m-1)}$, and arguing similarly as in \eqref{apr1},
\begin{eqnarray*}
 s_{k+1} - s_k &=&  
\sum_{m=k+2}^\infty  \phi(m,k+1) - \sum_{j=1}^k \phi(k+1,j) \\
&=& 
\sum_{m=k+2}^\infty  \frac{1}{(m-k-1)(m-1)} - \sum_{j=1}^k  \frac{1}{(k+1-j)k}\\
&=&  \frac{1}{k} \sum_{m=k+2}^\infty  \left( \frac{1}{m-k-1} - \frac{1}{m-1} \right) - \frac{1}{k} h_k\\
&=& \frac{1}{k} h_k - \frac{1}{k} h_k 
=0
\end{eqnarray*}
establishing \eqref{reduced}.
\end{proof}

 \section{Proof of Proposition \ref{P:alimit}.}
 \label{sec:proofalim}
 \subsection{Outline of proof}
 As noted in other discussions of the random tree model \cite{beta2,beta1},
 it is often more convenient to work with the associated {\em continuous time} chain which holds in each state $i \ge 2$ 
 for an Exponential(rate $h_{i-1}$) time, in other words has transition rates
 \begin{equation}
  \lambda(j,i) = \tfrac{1}{j-i}, \ \  1 \le i < j < \infty. 
  \label{L:def}
  \end{equation}
  We call this the {\em continuous HD chain} $(X_t, 0 \le t < \infty)$.
  Switching to this continuous time chain does not affect our stated definition of $a(n,i)$, though of course the mean occupation time in state $i$ changes from $a(n,i)$ to $a(n,i)/h_{i-1}$.
 And switching  does more substantially change the absorption time (to state $1$) $T_1$.
 As mentioned before, a detailed study of absorption time distributions is given in \cite{beta1}, and we quote two results from there.

 \begin{theorem}[\cite{beta1} Theorem 1.1]
 \label{T:beta1}
 \begin{equation} 
 \mbox{In continuous time, } 
 \Ex_x [T_1] =  \tfrac{6}{\pi^2} \log x + O(1) \mbox{ as } x \to \infty .
 \label{ED}
 \end{equation}
  \end{theorem}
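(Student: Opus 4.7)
The plan is to exploit the occupation-measure identity. In the continuous-time HD chain the mean time spent in state $i \ge 2$, given that $i$ is ever visited, is $1/h_{i-1}$, so
\[
\Ex_x[T_1] = \sum_{i=2}^{x} \frac{a(x,i)}{h_{i-1}}.
\]
By Theorem \ref{T:alimit} the summand converges pointwise: $a(x,i)/h_{i-1} \to 6/(\pi^2(i-1))$ as $x \to \infty$. Since
\[
\sum_{i=2}^{x} \frac{6}{\pi^2 (i-1)} = \frac{6}{\pi^2} h_{x-1} = \frac{6}{\pi^2} \log x + O(1),
\]
Theorem \ref{T:beta1} is equivalent to the uniform estimate
\[
\sum_{i=2}^{x} \frac{a(x,i) - b(i)}{h_{i-1}} = O(1),
\]
with $b(i)$ as in \eqref{bi}.

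The main obstacle is that the trivial bound $|a(x,i)-b(i)| \le 2$ only yields $O(h_{x-1})$ for the displayed sum, so something nontrivial is needed about the rate of convergence $a(x,i) \to b(i)$, which the soft coupling argument of Section \ref{sec:proofalim} does not supply. My first attempt would be to quantify that coupling: an explicit upper bound on the probability that two versions of the chain started from different levels above $i$ fail to coalesce before reaching $i$ (plausibly decaying like $i/x$) would be enough to make the tail of the sum summable.

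An alternative, more analytic route is to attack the recursion
\[
h_{x-1} \Ex_x[T_1] = 1 + \sum_{i=1}^{x-1} \frac{\Ex_i[T_1]}{x - i}
\]
directly. Substituting the ansatz $\Ex_x[T_1] = (6/\pi^2) \log x + g(x)$, the contribution of the logarithmic term should cancel by partial-fraction manipulation and Euler's formula, essentially the mechanism used in \eqref{apr1} and in the proof of Lemma \ref{L:hat}, leaving a self-bootstrapping equation for the bounded remainder $g(x)$. In either approach the delicate step is handling $i$ close to $x$, where the asymptotics for $\Ex_i[T_1]$ are only marginally in force. This is presumably why \cite{beta1} opts for Mellin transforms, which bypass the combinatorial bookkeeping by extracting the constant $6/\pi^2$ directly from a simple pole.
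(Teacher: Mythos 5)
This statement is not proved in the paper at all: it is imported verbatim from \cite{beta1} (see the footnote in the introduction, ``We quote one sharp estimate from \cite{beta1} as our Theorem \ref{T:beta1}''), so there is no internal proof to compare against, and any self-contained argument you give must stand on its own. Your proposal does not do that --- it is an honest map of the difficulties rather than a proof, and both routes you sketch have concrete gaps. The first route is circular within this paper's logic: the identity $\Ex_x[T_1]=\sum_{i=2}^x a(x,i)/h_{i-1}$ is correct, but Theorem \ref{T:alimit} (and the pointwise limits $a(x,i)\to b(i)$) is established here via Proposition \ref{P:alimit}, whose proof of Lemma \ref{L:maxcoup} invokes \eqref{ED} --- i.e.\ the very statement you are trying to prove --- at step \eqref{DXDY}. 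Even ignoring that, you correctly observe that pointwise convergence gives only $o(\log x)$ error and that a quantitative coalescence rate (something like $i/x$) would be needed; you do not produce one, and none is available from the soft coupling in Section \ref{sec:proofalim}.

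The second route has the same status. Writing $\Ex_x[T_1]=(6/\pi^2)\log x+g(x)$ and substituting into the recursion $h_{x-1}\Ex_x[T_1]=1+\sum_{i<x}\Ex_i[T_1]/(x-i)$ does make the logarithmic terms cancel up to $O(1)$ by the mechanism of \eqref{apr1}, but the resulting relation for the remainder is of the form $h_{x-1}\,g(x)=O(1)+\sum_{i<x}g(i)/(x-i)$, in which the kernel $\tfrac{1}{(x-i)h_{x-1}}$ has total mass exactly $1$. A crude bound therefore gives $\sup_{i\le x}|g(i)|$ on both sides with coefficient $1$ and does not bootstrap; one needs a genuinely finer argument (this is essentially why \cite{beta1} resorts to Mellin transforms and singularity analysis). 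In short: the proposal correctly diagnoses why the $O(1)$ error term is the hard part, but neither route is carried to completion, and the route via Theorem \ref{T:alimit} cannot be used here without breaking the paper's dependency structure.
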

 
 \begin{lemma}[From \cite{beta1} Corollary 1.3]
 \label{L:Tk}
  Let, for positive integers $k$, 
\begin{align}
T_k:=\min\{t:X_t\le k\}.  
\end{align}
Then, for any fixed $k\ge1$ and $t\ge0$, we have 
$\Pr_x(T_k\ge t)\to 1$ as $x \to \infty$; in other words,
$T_k\pto\infty$.  
\end{lemma}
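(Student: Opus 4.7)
The plan is to reduce the statement to a quantitative control of $\log X_t$ and then apply Markov's inequality. Since the sample paths of the HD chain are non-increasing, the event $\{T_k \le t\}$ coincides with $\{X_t \le k\}$, so it suffices to show $\Pr_x(X_t \le k) \to 0$ as $x \to \infty$ for each fixed $t \ge 0$ and $k \ge 1$.

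The main technical step is to bound the generator of the continuous HD chain applied to $\log$. Using the jump rates $\lambda(j, j-m) = 1/m$ from \eqref{L:def}, one obtains
\[
\mathcal L \log(j) \;=\; \sum_{m=1}^{j-1} \frac{\log(1 - m/j)}{m}, \qquad j \ge 2,
\]
with $\mathcal L \log(1) = 0$ since $1$ is absorbing. Expanding $-\log(1 - m/j) = \sum_{k \ge 1} (m/j)^k / k$ and interchanging summation gives
\[
-\mathcal L \log(j) \;=\; \sum_{k \ge 1} \frac{1}{k j^k} \sum_{m=1}^{j-1} m^{k-1} \;\le\; \sum_{k \ge 1} \frac{1}{k^2} \;=\; \frac{\pi^2}{6},
\]
where the inner inequality uses the Riemann-sum bound $\sum_{m=1}^{j-1} m^{k-1} \le j^k/k$.

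I would then apply Dynkin's formula; since $X_t \in [1, x]$ the integrand $\mathcal L \log(X_s)$ is bounded, so
\[
\Ex_x[\log X_t] \;=\; \log x + \Ex_x \!\int_0^t \mathcal L \log(X_s)\, ds \;\ge\; \log x - \frac{\pi^2 t}{6} .
\]
Because $X_t \le x$ almost surely, $\log x - \log X_t$ is non-negative, so Markov's inequality yields
\[
\Pr_x(X_t \le k) \;=\; \Pr_x\!\bigl(\log x - \log X_t \ge \log x - \log k\bigr) \;\le\; \frac{\pi^2 t / 6}{\log x - \log k} \;\longrightarrow\; 0
\]
as $x \to \infty$, which gives the claim.

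I expect the main obstacle to be the generator bound $-\mathcal L \log(j) \le \pi^2/6$: this is essentially the same Euler-series computation that produces the constant $6/\pi^2$ in Theorem \ref{T:beta1}, so the present approach amounts to a slightly weaker but self-contained version of the drift analysis underlying that result.
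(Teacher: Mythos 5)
Your proof is correct, but it takes a genuinely different route from the paper. The paper uses the Lyapunov function $Z_t=X_t^{-1/2}$: a short computation with the rates \eqref{L:def} gives $\frac{d}{dt}\Ex_x[Z_t]\le 2\,\Ex_x[Z_t]$, hence $\Ex_x[Z_t]\le e^{2t}x^{-1/2}$, and Markov's inequality yields $\Pr_x(X_t\le k)\le e^{2t}\sqrt{k/x}$. You instead bound the generator acting on $\log$, obtaining the uniform drift bound $-\mathcal{L}\log(j)\le\sum_{k\ge1}k^{-2}=\pi^2/6$, and conclude via Dynkin's formula and Markov's inequality that $\Pr_x(X_t\le k)\le \frac{\pi^2 t/6}{\log(x/k)}$. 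All the steps check out: the interchange of sums is justified by positivity (Tonelli), the Riemann-sum bound $\sum_{m=1}^{j-1}m^{k-1}\le j^k/k$ is valid, and Dynkin's formula is unproblematic since the chain started at $x$ lives on the finite set $\{1,\dots,x\}$. The trade-off is quantitative: the paper's tail bound decays polynomially in $x$ (at the cost of a factor $e^{2t}$), while yours decays only like $1/\log x$ (but grows only linearly in $t$); either suffices for the lemma, which only asks for convergence at fixed $t$. A pleasant feature of your version is that your generator computation shows $-\mathcal{L}\log(j)\to\pi^2/6$ as $j\to\infty$, so it makes transparent the drift heuristic behind Theorem \ref{T:beta1}, namely $\Ex_x[T_1]\approx\frac{6}{\pi^2}\log x$, whereas the exponent $-1/2$ in the paper's argument is comparatively unmotivated.
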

Lemma \ref{L:Tk} can in fact be obtained by a simple direct proof, given for completeness in section \ref{sec:prlemma}.

 So now we consider the continuous time setting. 
 We will use a {\em shift-coupling}  \cite{thorisson}. 
 For our purpose, a shift-coupling
  ($(X_t,Y_t), 0 \le t < \infty)$ started at ($x_0,y_0)$ is a  process such that, conditional on $X_t = x_t,Y_t = y_t$ and the past, either \\
  (i) over $(t,t+dt)$ each component moves according to $\lambda(\cdot,\cdot)$, maybe dependently; or \\
 (ii) one component moves as above while the other remains unchanged.
 
 \smallskip \noindent
 Such a process must reach state $(1,1)$ and stop, at some time $T_{(1,1)} < \infty$.
 So the {\em coupling time} is such that
 \[ T^{couple} := \min\{t: X_t = Y_t\} \le T_{(1,1)}    \]
 and we can arrange that $X_t  = Y_t$ for $t \ge T^{couple}$.
 Write $S^{couple}:= X_{T^{couple}}= Y_{T^{couple}}$ for the coupling {\em state}.
 We will construct a shift-coupling in which, for each $i \ge 1$,
 \begin{equation}
 \Pr_{x_0,y_0} (S^{couple} < i) \to 0 \mbox{ as } x_0, y_0 \to \infty. 
 \label{sec-c}
 \end{equation}
 This is clearly sufficient to prove the main ``limits exist" part \eqref{ailim}
 of Proposition \ref{P:alimit},
 because $|a(x_0,i) -a(y_0,i)| \le \Pr_{x_0,y_0} ( S^{couple}< i)$.
 
  In outline the construction is very simple.  
 If the initial states $x_0$ and $y_0$ are not of comparable size, then we run the chain only from the larger state (as in (ii) above) until they are of comparable size;
 then at each time there is some non-vanishing probability that we can couple at the next transition (as in (i) above).
 
 The details are given via two lemmas below.
By symmetry, it suffices to consider the case $x_0\le y_0$, and
by considering subsequences we may assume that $x_0/y_0\to a$ for some
$a\in [0,1]$.  
First we consider the ``comparable size starts" case $a>0$, and then the
case $a=0$.
 As in \cite{beta2} and as suggested by \eqref{ED} we analyze the processes
 on the log scale.
 
  \subsection{The maximal coupling regime}\label{SSmaximal}
 In the maximal coupling regime, we construct the joint process $((X_t,Y_t), 0 \le  t < \infty)$ as follows.
 From $(X_t,Y_t) = (x,y)$ with $x\le y$, 
 each component moves according to $\lambda(\cdot,\dot)$ but with the joint distribution that maximizes the probability that they move to the same state.
That joint distribution is such that,
for infinitesimal $dt$,
 \[ \Pr_{x,y}(X_{dt} = Y_{dt} = i) = \Pr_{x,y}(Y_{dt} = i) = \frac{dt}{y-i}, 
\quad 1 \le i < x .\]
So
 \[ \Pr_{x,y}(X_{dt} = Y_{dt}) =  \sum_{i=1}^{x-1} \frac{dt}{y-i} 
\ge \frac{x-1}{y}.
\] 
Hence, for any $c \in (0,1)$, if $c\le x/y \le 1$, then
\begin{equation}
\Pr_{x,y}(X_{dt} = Y_{dt})  
\ge  (c-y\qw) dt 
\label{one_step}
.\end{equation}

 \begin{lemma}
For the maximal coupling process,
if $y_0 \to \infty$ and $x_0/y_0 \to a \in (0,1]$ then for each $k$ we have
$\Pr_{x_0,y_0}(S^{couple}\le k) 
\to 0 $.
\label{L:maxcoup}
\end{lemma}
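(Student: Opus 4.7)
The plan is as follows. By symmetry I assume $x_0 \le y_0$, so the hypothesis becomes $x_0/y_0 \to a \in (0,1]$. Fix $k \ge 1$ and $\epsilon > 0$. The idea is to run the maximally-coupled process for a positive time $\tau$ (depending on $a$ and $\epsilon$) and to show that on a good event of probability tending to $1$, coupling occurs during $[0,\tau]$ and at a state exceeding $k$.

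Define the good event $G_\tau$ by three properties: (a) $Y_t > k$ throughout $[0,\tau]$; (b) $X_t > k$ throughout; (c) $X_t/Y_t \ge a/2$ throughout. Parts (a) and (b) follow from Lemma~\ref{L:Tk} applied to the marginals of $Y_t$ and $X_t$, which under any shift-coupling remain standard continuous-time HD chains started at $y_0$ and $x_0$ respectively; so $T_k^{(Y)}, T_k^{(X)} \pto \infty$ ensures (a) and (b) hold with probability $\to 1$ for any fixed $\tau$. Part (c) is the key technical step: both $\log X_t$ and $\log Y_t$ have marginal drift $-\pi^2/6$ per unit time (this rate is consistent with Theorem~\ref{T:beta1} and follows from a generator computation, yielding $\sum_m 1/m^2 = \pi^2/6$ much as in Section~\ref{SSpf1}), so the log-ratio $\log(Y_t/X_t)$ is a driftless semimartingale with bounded quadratic-variation rate. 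A Doob-type maximal inequality then gives uniform concentration of the log-ratio around $\log(y_0/x_0) \to -\log a$ on $[0,\tau]$, hence $X_t/Y_t \ge a/2$ with probability $\to 1$ for fixed $\tau$ as $y_0 \to \infty$.

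On $G_\tau$, equation~\eqref{one_step} provides a uniform lower bound $a/2 - y_0^{-1} \ge a/3$ (for $y_0$ large) on the infinitesimal coupling rate. Hence, conditionally on $G_\tau$, the probability that coupling has not occurred by time $\tau$ is at most $e^{-a\tau/3}$, which we can make $< \epsilon/2$ by choosing $\tau$ sufficiently large. If coupling does occur by time $\tau$, it occurs at state $Y_{T^{couple}} > k$ by property~(a), so $S^{couple} > k$ on this event. Combined with $\Pr(G_\tau^c) < \epsilon/2$ (for $y_0$ large), this yields $\Pr(S^{couple} \le k) < \epsilon$.

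The main obstacle is the quantitative Doob estimate behind (c). Because the log-ratio's quadratic variation grows linearly in $t$, the naive Doob bound controls the ratio only for bounded $\tau$, while the coupling estimate requires $\tau$ large; for arbitrarily small $\epsilon$ one may need to iterate via the strong Markov property, restarting from $(X_\tau, Y_\tau)$ on the non-coupling event (which, for $y_0$ large, still has comparable components and both coordinates far above $k$), and showing the non-coupling probability decays geometrically over $\Theta(\log y_0)$ such restarts before $Y$ descends below $k$.
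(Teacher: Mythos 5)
There is a genuine gap at step (c), and it is the heart of the matter. Your claim that ``$X_t/Y_t \ge a/2$ throughout $[0,\tau]$ with probability $\to 1$ for fixed $\tau$ as $y_0\to\infty$'' does not follow from a Doob-type bound. The quadratic variation of $\log(Y_t/X_t)$ accumulates at a rate bounded below by a positive constant (the log-jumps of each marginal have second moment per unit time of order $2\zeta(3)$, independent of how large the states are), so the fluctuation of the log-ratio over $[0,\tau]$ is of order $\sqrt\tau$ \emph{uniformly in} $y_0$; the probability of a deviation exceeding the fixed amount $\log 2$ is bounded by something like $C\tau/(\log 2)^2$, which does not tend to $0$ as $y_0\to\infty$ and in fact grows with $\tau$, while your coupling bound $e^{-a\tau/3}$ needs $\tau$ large. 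You acknowledge this tension in your final paragraph, but the proposed fix (iterating via the strong Markov property) is circular: on the non-coupling event at time $\tau$ you would need to know the components are still comparable, which is exactly the control that is missing. There is also a minor issue in conditioning on the whole-path event $G_\tau$ and then asserting an exponential non-coupling bound; this should be phrased via stopping times (stop at the first time the ratio or either coordinate leaves the good region), as otherwise conditioning on the future can distort the rates.

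The paper closes precisely this gap by a different, global argument that avoids any drift or martingale analysis of the log-ratio. One does not need the ratio to stay near $a$; it suffices to bound the probability that it \emph{ever} drops below a small constant $c$ (chosen after $a$) before $Y$ reaches a level $\ell$. Since the maximal coupling is monotone, $T^Y_1-T^X_1\ge 0$, and by Theorem \ref{T:beta1} its expectation is $\frac{6}{\pi^2}\log(y_0/x_0)+O(1)$, while on the event that the ratio drops to $c$ the conditional expectation of $T^Y_1-T^X_1$ is at least $\frac{6}{\pi^2}\log(1/c)-O(1)$. Markov's inequality then bounds the probability of that event by roughly $(\log(1/a)+C)/(\log(1/c)-C)$, uniformly in time, which is made small by taking $c$ small. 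The remaining two terms in the decomposition \eqref{pask1} are handled exactly as in your steps (a)/(b) and your exponential coupling estimate. If you replace your step (c) by this absorption-time/Markov-inequality argument, the rest of your outline goes through.
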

\begin{proof}
Write $T^Y_k := \min\{t: Y_t \le k\}$,
and note that
$\set{S^{couple}\le k} = \set{ T^Y_k \le T^{couple}}$.
 Consider the process $(X_t,Y_t)$ using the maximal coupling with $(X_0,Y_0)
= (x_0,y_0)$, where $x_0 \le y_0$.
The coupling is stochastically monotone, so  $X_t \le Y_t$ and the absorption times into state $1$ satisfy $T^X_1 \le T^Y_1$.
 By  \eqref{ED} we  have
 \begin{equation}
 \Ex_{x_0,y_0}  [T^Y_1  - T^X_1] 
=  \frac{6}{\pi^2} \log \frac{y_0}{x_0} + O(1)
 \label{DXDY}
 \end{equation}
 where the $O(1)$ bound is uniform in all $x_0,y_0\ge1$.
 
 Fix $0 < c < a \le1$. 
 Consider the stopping time
 \[ U_c := \min \{t: X_t/Y_t \le c \} . \]
Fix also a large $\tau$ and let $\ell\ge k$. 
Then
 \begin{align}\label{pask1}
&    \Pr_{x_0,y_0}(S^{couple}  \le k)
=     \Pr_{x_0,y_0}(T^{couple}\ge T^Y_k)
\le    \Pr_{x_0,y_0}(T^{couple}\ge T^Y_\ell)
\notag\\& \qquad
\le   
\Pr_{x_0,y_0}(U_c< T^Y_\ell) 
+
\Pr_{x_0,y_0}(T^Y_\ell\le\tau) 
+
\Pr_{x_0,y_0}(T^{couple}\wedge U_c\wedge  T^Y_\ell>\tau) 
. \end{align}
We consider the three terms in \eqref{pask1} separately.

On the event $\{ U_c  < T^Y_\ell  \}$ 
the conditional expectation of $(T^Y_1 - T^X_1)$ is at least  
$ \tfrac{6}{\pi^2} \log 1/c - O(1) $ by (\ref{DXDY}) and conditioning on
$(X_{U_c},Y_{U_c})$, noting that $Y_{U_c}/X_{U_c}\ge1/c$.
So by Markov's inequality and (\ref{DXDY}) again,
there exists a constant $C$ (not depending on $a,c$)
such that, provided $\log 1/c > C$ and $y_0$ is large enough,
\begin{align}
 \Pr_{x_0,y_0} (U_c \le T^Y_k) 
\le \frac{ \log (y_0/x_0) + O(1)}{\log 1/c  - O(1)}
\le \frac{ \log 1/a + C}{\log 1/c  - C}. 
\end{align}
This holds for any sufficiently small $c > 0$, and
 can be made arbitrarily small  by choosing $c$ small.

As long as $\{t < T^{couple}\wedge U_c\wedge T^Y_\ell \}$, 
the coupling event happens at
rate $ \ge c - \ell\qw$ by \eqref{one_step}, so
\begin{align}
  \Pr_{x_0,y_0}(T^{couple}\wedge U_c\wedge T^Y_\ell>\tau) \le
\exp\left( (-c + \ell\qw ) \tau \right),
\end{align}
which for fixed $c$ can be made arbitrarily small by choosing 
$\ell\ge 2/c$ and 
$\tau$ large.

Finally, for fixed $\ell$ and $\tau$,
$\Pr_{x_0,y_0}(T^Y_\ell\le\tau) \to0$ as $y_0\to\infty$ by Lemma \ref{L:Tk}.

Consequently, \eqref{pask1} shows that
$\Pr_{x_0,y_0}(S^{couple}  \le k)\to0$ as $y_0\to\infty$ and $x_0/y_0\to a$.
\end{proof}

 \subsection{The large discrepancy regime}\label{SSshift}
 Given Lemma \ref{L:maxcoup}, to establish \eqref{sec-c} it remains only to consider the case $x_0/y_0 \to 0$.
Here we first run the chain $(Y_t)$ starting from $y_0 \gg x_0$ while
holding $X_t = x_0$ fixed.
 The next lemma shows that the $(Y_t)$ process does not overshoot $x_0$ by
 far, on the log scale. 
 
 For $x\ge1$  write as above
 \[ T^Y_x := \min\{t: Y_t \le x\}.  \]
Let also
 \[ V_x := \log x - \log Y_{T^Y_x}  \quad \mbox{(the overshoot)}.\]
 \begin{lemma}
 There exists an absolute constant $K$ such that
 \[ \Ex_y [V_x] \le K, \quad 1 \le x < y < \infty  . \]
 \label{L:over}
 \end{lemma}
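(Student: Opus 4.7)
The plan is to apply the strong Markov property at the crossing time $T^Y_x$ to reduce the problem to bounding a conditional expectation given the pre-crossing state, and then to estimate that conditional expectation by a direct calculation carried out in two regimes. Set $M := Y_{T^Y_x -}$, the state just before the crossing jump, so $M > x$. By the strong Markov property, conditional on $M = m$ the landing state $Y_{T^Y_x}$ lies in $\{1,\ldots,x\}$ with $\Pr(Y_{T^Y_x} = i \mid M = m) = 1/((m-i) S_m)$, where $S_m := \sum_{i=1}^x 1/(m-i)$, since $p(m,i) \propto 1/(m-i)$. Hence
\[
\Ex[V_x \mid M = m] = R(m,x) := \frac{1}{S_m} \sum_{i=1}^x \frac{\log(x/i)}{m-i},
\]
and averaging over $M$ reduces the lemma to showing that $R(m,x)$ is bounded by an absolute constant uniformly in $m > x \ge 1$.

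The key elementary input I would use is $\sum_{i=1}^x \log(x/i) = x \log x - \log(x!) \le x$, which follows from $x! \ge (x/e)^x$. I would then split into two regimes according to the size of $m$ relative to $x$. For $m \ge 2x$, I use $1/(m-i) \le 1/(m-x)$ uniformly in $i \le x$ to bound the numerator of $R(m,x)$ by $x/(m-x)$, while $S_m \ge x/m$, yielding $R(m,x) \le m/(m-x) \le 2$. For $x < m < 2x$, I split the numerator at $i = \lceil x/2 \rceil$: terms with $i > x/2$ satisfy $\log(x/i) \le \log 2$ and thus contribute at most $(\log 2) S_m$; terms with $i \le x/2$ satisfy $1/(m-i) \le 2/x$ and, by the elementary input, contribute at most $(2/x) \cdot x = 2$. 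Since $S_m \ge x/m \ge 1/2$ throughout this regime, we obtain $R(m,x) \le \log 2 + 4$.

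The principal technical point will be the second regime: the naive bound $R(m,x) \le m/(m-x)$ breaks down as $m \downarrow x+1$, which is precisely why the split estimate is needed. The trivial case $x = 1$ (where $V_1 = 0$ identically) requires no separate treatment, and beyond the bookkeeping in the split I do not anticipate a serious obstacle.
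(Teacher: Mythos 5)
Your proof is correct, and it takes a genuinely different and more elementary route than the paper's. You decompose at the \emph{last} state $M$ before the crossing: the strong Markov property applied at the visit to $m$ gives the conditional landing law $\Pr(Y_{T^Y_x}=i\mid M=m)=\frac{1/(m-i)}{S_m}$ exactly as you state (the normalizing factor $h_{m-1}$ cancels), and the uniform bound on $R(m,x)$ then follows from the clean inequality $\sum_{i=1}^x\log(x/i)\le x$ together with your two-regime split; all the estimates check out ($S_m\ge x/m$, and $m-i>x/2$ for $i\le x/2$ in the regime $x<m<2x$), giving the explicit constant $K=4+\log 2$, with $V_1\equiv0$ handled trivially. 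The paper instead decomposes at the \emph{first} step: it derives a one-step mean-excess bound on the log scale, $\Ex_y[(\log y-\log Y_1-a)^+]\le K\Pr_y(\log y-\log Y_1\ge a)$, by comparing the one-step jump law with the limiting measure $\theta[a,\infty)=-\log(1-e^{-a})$ up to constant factors, and then closes the argument by induction on $y$ via $m(x,y):=\max_{x\le z\le y}\Ex_z[V_x]$. Your last-jump decomposition avoids both the induction and the comparison with $\theta$, and yields an explicit constant; the paper's route is heavier but produces the two-sided estimate \eqref{pask} identifying the log-scale jump measure, which has some independent interest. Either argument suffices for the later uses of the lemma (tightness of the overshoots and \eqref{Lan}).
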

 
  \begin{proof}
 Here we work in  discrete time, which does not change  $\Ex_y [V_x] $.
 Consider a single transition $y \to Y_1$.
 From the transition probabilities \eqref{P:def} we obtain the exact
 formula, for any $a>0$,  and writing $b=e^{-a}\in(0,1)$,
 \begin{align}\label{paskris}
   \Pr_y(\log y - \log Y_{1} \ge a) 
=
\Pr_y(Y_1 \le e^{-a}y)
=
\frac{\sum_{y-\floor{b y}}^{y-1}\frac{1}i}{h_{y-1}}
=
\frac{h_{y-1}-h_{y-\floor{b y}-1}}{h_{y-1}}
. \end{align}
 If, say, $a\ge1$ is fixed, then it follows by the formula 
 \begin{equation}
h_n = \log n + \gamma + o(1) \mbox{ as } n \to \infty
 \label{hn}
 \end{equation}
that as $y \to \infty$
 \begin{align}
   \Pr_y(\log y - \log Y_{1} \ge a) 
\to
 \frac{\theta[a, \infty)}{\log y}
 \label{approx}
, \end{align}
 where $\theta$ is the measure on $(0, \infty)$ defined by
 \begin{equation}
  \theta[a, \infty) := - \log (1 - e^{-a}) . 
  \label{def:theta}
  \end{equation}
  We will show that the approximation \eqref{approx} holds within some constant factors
uniformly for all $y\ge2$ and 
$a\in[a_1(y),a_2(y)]$ where
$a_1(y):= \log y-\log(y-1)=-\log(1-1/y)$
and $a_2(y):=\log y$.
(Note that for $a\le a_1(y)$, trivially $\Pr_y(\log y-\log Y_1\ge a)=1$,
and for $a>a_2(y)$, trivially 
$\Pr_y(\log y-\log Y_1\ge a)=0$.) 
That is, for some $C_1,C_2>0$,
 \begin{align}
C_1 \frac{\theta[a, \infty)}{\log y}
\le
   \Pr_y(\log y - \log Y_{1} \ge a) 
\le 
C_2 \frac{\theta[a, \infty)}{\log y},
\qquad a\in [a_1(y),a_2(y)]
 \label{pask}
. \end{align}
To verify \eqref{pask}, note first that by \eqref{hn} we only have to
estimate the numerator in \eqref{paskris}.
We have  
\begin{align}\label{paskagg}
\int_{y-\floor{by}}^{y}\frac{dx}{x}\le
\sum_{y-\floor{b y}}^{y-1}\frac{1}i 
\le \int_{y-\floor{by}}^{y-1}\frac{dx}{x} +\frac{1}{y-\floor{by}}
\end{align}
where $b=e^{-a}\in[e^{-a_2(y)},e^{-a_1(y)}]=[\frac{1}{y},\frac{y-1}{y}]$,
and it is easily seen that both sides of \eqref{paskagg} are within
constant factors of $-\log(1-b)=\theta[a,\infty)$, thus showing \eqref{pask}.

The measure $\theta$ has the property 
\begin{align}
  \label{pingst}
  \frac{ \int_a^\infty (u-a) \theta(du) }{\theta[a,\infty)} \uparrow 1
  \mbox{ as } a \uparrow \infty,
\end{align}
and thus the conditional mean excess over $a$ is bounded above by 1.
One can now use \eqref{pask} to see that
there exists $K  < \infty$ 
such that the corresponding property holds for one transition $y \to Y_1$ of
the discrete chain on the log scale: 
 \begin{equation}
  \Ex_y[ (\log y - \log Y_{1} - a)^+] 
\le K \Pr_y(\log y - \log Y_{1} \ge a) .
 \label{Eyyy}
 \end{equation}
In fact, for $a_1(y)\le a\le a_2(y)$, this follows immediately from
\eqref{pask}, \eqref{pingst},
and
 \begin{equation}
  \Ex_y[ (\log y - \log Y_{1} - a)^+] 
=\int_a^{a_2(y)}\Pr_y(\log y - \log Y_{1} >s)ds
 \label{Eyyy+}
. \end{equation}
For $a<a_1(y)$, \eqref{Eyyy} follows from the case $a=a_1(y)$, with the 
right side equal to K, and for $a>a_2(y)$, both sides of \eqref{Eyyy} are 0.

 Now fix $x$.  
 For $y \ge x$ write
   \[ m(x,y) := \max_{x \le z \le y} \Ex_z[V_x]  \]
and note that $m(x,x)=0$.
If $y>x$,  by considering  the first step $y \to Y_1$, which 
   goes either into the interval $[1,x]$ (probability $q_{x,y}$ say) or into
   the interval $[x+1,y-1]$:
   \begin{align}\label{paska}
 \Ex_{y}[V_x] \le    \Ex_{y}[  (\log x - \log Y_1)^+  ]    + \Pr_y( \log x - \log Y_1 < 0    )  m(x,y-1) .     
   \end{align}
 By \eqref{Eyyy} with $a = \log y - \log x$
 \begin{align}\label{paskb}
   \Ex_{y}[  (\log x - \log Y_1)^+  ]    
\le  K \Pr_y(\log x - \log Y_1 \ge 0) 
=Kq_{x,y}
. \end{align}
 Combining \eqref{paska} and \eqref{paskb}:
 \begin{align}\label{paskc}
 \Ex_{y}[V_x] \le   K q_{x,y} + (1-q_{x,y})m(x,y-1)   
 \end{align}
  and so the bound $m(x,y) \le K$ holds
 by induction on $y = x, x+1, x+2, \ldots$.
\end{proof}

 \subsection{Completing the proof of Proposition \ref{P:alimit}.}
  Given Lemma \ref{L:maxcoup}, 
to show \eqref{sec-c}, 
it remains only to consider the case $x_0/y_0 \to 0$.
  Use the shift regime dynamics $(x_0, Y_t)$ in section \ref{SSshift}
from $Y_0 = y_0$ until time
   \[ T^Y_{x_0} := \min\{t: Y_t \le x_0\}  ,\]
and then use the maximal coupling in section \ref{SSmaximal}. 
   Lemma \ref{L:over} shows that the overshoot of the first phase
 \[ V_{x_0} := \log x_0 - \log Y_{T^Y_{x_0} } \]
 has $\Ex_{y_0}[V_{x_0}] \le K$.
Hence the overshoots are tight, and by considering a subsequence we may
assume that $V_{x_0}$ converges in distribution to some random variable $V$.
For convenience, we may also by the
Skorohod coupling theorem \cite[Theorem~4.30]{Kallenberg}
assume that $V_{x_0}$ converges to $V$ almost surely, and thus
\begin{align}
  x_0/Y_{T^Y_{x_0}} \asto e^V>0.
\end{align}
This allows us to 
condition on $Y_{T_{x_0}}$ and
apply Lemma \ref{L:maxcoup} 
(with $X$ and $Y$ interchanged)
with starting state $(x_0, Y_{T_{x_0} })$.
Hence, for every fixed $k$, 
we have
\begin{align}
\Pr_{x_0,y_0}\left(S^{couple}\le k\mid Y_{T_{x_0}}\right)\to 0 \qquad \text{a.s.}  
\end{align}
and thus \eqref{sec-c} follows by taking the expectation.
As noted after \eqref{sec-c}, this completes the proof of 
the main ``limits exist" part \eqref{ailim}
 of Proposition \ref{P:alimit}.
 To complete the proof of
Proposition \ref{P:alimit},
and by  section \ref{SSpf1} thus also the proof of Theorem \ref{T:alimit},
we need to verify \eqref{aisum}, that is 
\begin{equation}
a(i)  =  \sum_{ j >i} a(j) p(j,i), \quad i \ge 1 .  \label{aisum3} 
\end{equation}
From the pointwise convergence $a(n,j) \to a(j)$, Fatou's lemma gives
\begin{eqnarray*}
\sum_{j>i} a(j)p(j,i) &\le& \liminf_n \sum_{j>i} a(n,j)p(j,i) \\
&= & \lim_n a(n,i) = a(i) .
\end{eqnarray*}
To prove equality we need to show
that, for every fixed $i$, 
\begin{equation}
\lim_{L \to \infty} \liminf_n \sum_{j>L} a(n,j)p(j,i) = 0.
\label{Lan}
\end{equation}
This is an easy consequence of the overshoot bound, as follows.
Assume, as we may, $L>i$. 
The sum above is the probability, starting at $n$, that the first entrance into $\{L, L-1, L-2, \ldots\}$ is at $i$, 
which is bounded by the probability that first entrance is in $\{i, i-1, i-2, \ldots\}$.
The latter, in the notation of Lemma \ref{L:over}, is just 
$\Pr_n(V_L \ge \log L - \log i)$.
By Markov's inequality and Lemma \ref{L:over}
\[  \sum_{j>L} a(n,j)p(j,i) \le \frac{K}{\log L - \log i} \]
implying \eqref{Lan}.

  \section{Direct proof of Lemma \ref{L:Tk}}
  \label{sec:prlemma}
  Let $Z_t:=X_t\qqw$. Then \eqref{L:def} implies that for an infinitesimal
time $dt$, we have
\begin{align}
  \Ex_x[Z_{dt}]-x\qqw &
= dt\sum_{1\le i<x}\gl(x,i)\left(\frac{1}{\sqrt i}-\frac{1}{\sqrt x}\right)
= dt\sum_{1\le i<x}\frac{\sqrt x-\sqrt i}{(x-i)\sqrt{i}{\sqrt x}}
\notag\\&
= dt\sum_{1\le i<x}\frac{1}{(\sqrt x+\sqrt i)\sqrt{i}{\sqrt x}}
\le \frac{ dt}{x}\sum_{1\le i<x}\frac{1}{\sqrt{i}}
\notag\\&
\le \frac{ dt}{x}\int_{0}^x \frac{1}{\sqrt y}\, dy
=2\frac{dt}{\sqrt{x}}=2 Z_0 dt.
\end{align}
Hence, by the Markov property,
\begin{align}
  \frac{d}{dt}\Ex_x[Z_t] \le 2 Z_t,
\qquad t\ge0,
\end{align}
and consequently,
\begin{align}
  \Ex_x [Z_t] \le e^{2t} Z_0 = e^{2t}x\qqw,
\qquad t\ge0.
\end{align}
Finally, Markov's inequality yields
\begin{align}
  \Pr_x (T_k \le t) 
= \Pr_x(X_t\le k)
= \Pr_x(Z_t\ge k\qqw)
\le e^{2t}\sqrt{k/x}.
\end{align}

\providecommand{\bysame}{\leavevmode\hbox to3em{\hrulefill}\thinspace}
\providecommand{\MR}{\relax\ifhmode\unskip\space\fi MR }
\providecommand{\MRhref}[2]{%
  \href{http://www.ams.org/mathscinet-getitem?mr=#1}{#2}
}
\providecommand{\href}[2]{#2}


\end{document}